\numberwithin{equation}{section}
\newtheorem{Theorem}{Theorem}[section]
\newtheorem{Corollary}[Theorem]{Corollary}
\newtheorem{Lemma}[Theorem]{Lemma}
\newtheorem{Proposition}[Theorem]{Proposition}
\newtheorem{Remark}{Remark}[section]
\title[Fractional Chain rule in Sobolev spaces]%
{Remark on the Chain rule of fractional derivative in the Sobolev framework}
\author[K. Fujiwara]{Kazumasa Fujiwara}
\address{%
Graduate School of Mathematics, Nagoya University,
Furocho, Chikusaku, Nagoya Japan, 464-8602 }
\email{fujiwara.kazumasa@math.nagoya-u.ac.jp}
\keywords{Chain rule, power product, fractional derivative, Sobolev spaces}
\subjclass[2010]{46E35, 42B25}
\begin{document}

\maketitle

\begin{abstract}
A chain rule for power product is studied
with fractional differential operators
in the framework of Sobolev spaces.
The fractional differential operators are defined by the Fourier multipliers.
The chain rule is considered newly in the case where the order of differential operators
is between one and two.
The study is based on the analogy of the classical chain rule or Leibniz rule.
\end{abstract}

\section{Introduction}
The chain rule or Leibniz rule is an essential tool
to study nonlinear differential equations.
In the study of nonlinear partial differential equations(PDEs),
fractional differential operators are also known as powerful tools.
So, in order to analyze nonlinear PDEs,
chain rules for fractional differential operators are naturally required.
Even though fractional differential operators may be non-local unlike classical operators,
estimates for fractional derivative has been studied
on the analogy of classical chain rules.
The history of this study can go back at least to the work of Kato and Ponce \cite{KP88}.

In this paper,
we consider a chain rule corresponding to the identity:
	\[
	\frac{d}{dx} F(u) = F'(u) u',
	\]
in the framework of Riesz potential space $\dot H_p^s = D^{-s} L^p(\mathbb R^d)$,
where $s \in \mathbb R$, $d \geq 1$, and $L^p$ is the usual Lebesgue space.
$\dot H_p^s$ is also called as homogeneous Sobolev space.
The fractional differential operator $D^s = (-\Delta)^{s/2}$ is recognized as a Fourier multiplier by
$D^{s} = \mathfrak F^{-1} |\cdot|^s \mathfrak F$,
where $\mathfrak F$ is the standard Fourier transform.
Especially, we study the case where $F$ behaves like power product, that is,
$F(z) \sim |z|^{p-1} z$.

In \cite{CW91}, Christ and Weinstein showed the following estimate:
\begin{Proposition}[{\cite[Proposition:3.1]{CW91}}]
\label{Proposition:1.1}
Let $d \geq 1$ and $s \in (0,1)$.
Let $F \in C(\mathbb C)$ and $G \in C(\mathbb C:[0,\infty))$ satisfy
	\[
	|F(u) - F(v)|
	\leq (G(u)+G(v))|u-v|.
	\]
Let $1 \leq p < \infty$, $1 < r, q < \infty$ satisfy
	\begin{align}
	\frac 1 p = \frac 1 q + \frac 1 r.
	\label{eq:1.1}
	\end{align}
The estimate
	\[
	\| F(u) \|_{\dot H_p^s}
	\leq \| G(u) \|_{L^q} \| u \|_{\dot H_r^s}.
	\]
holds for any $u \in \dot H_r^s$ with $G(u) \in L^q$.
\end{Proposition}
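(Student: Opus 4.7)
The plan is to reduce the estimate to a pointwise inequality for the Stein--Strichartz square function. For $s\in(0,1)$ and $1<p<\infty$ one has the equivalence
\[
\|f\|_{\dot H_p^s}\sim\|S_sf\|_{L^p},\qquad S_sf(x):=\Bigl(\int_{\mathbb R^d}\frac{|f(x+y)-f(x)|^2}{|y|^{d+2s}}\,dy\Bigr)^{1/2},
\]
which converts the Sobolev estimate into an $L^p$ bound on a nonnegative functional of first-order finite differences, and makes the structural hypothesis on $F$ directly exploitable in integral form.

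Applying the hypothesis to $u(x+y)$ and $u(x)$ under $S_sF(u)(x)^2$, together with $(a+b)^2\le 2(a^2+b^2)$, yields the pointwise decomposition
\[
S_sF(u)(x)\lesssim G(u(x))\,S_su(x)+\tilde S(x),\qquad \tilde S(x)^2:=\int_{\mathbb R^d}\frac{G(u(x+y))^2\,|u(x+y)-u(x)|^2}{|y|^{d+2s}}\,dy.
\]
The first contribution poses no difficulty: H\"older's inequality together with \eqref{eq:1.1} gives
\[
\|G(u)\,S_su\|_{L^p}\le \|G(u)\|_{L^q}\|S_su\|_{L^r}\sim \|G(u)\|_{L^q}\|u\|_{\dot H_r^s}.
\]

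The term $\tilde S$ is the main obstacle, since the argument of $G$ is translated by the same variable against which the square-function kernel is integrated. I would handle it by first substituting $z=x+y$ (equivalently, $y\mapsto -y$) to relocate the shift onto $u$ alone, then applying Minkowski's integral inequality to exchange the inner $L^2_y$ with the outer $L^p_x$; at each fixed $y$, a further H\"older step via \eqref{eq:1.1} factors out $\|G(u)\|_{L^q}$ and leaves $\|u(\cdot+y)-u(\cdot)\|_{L^r}/|y|^{(d+2s)/2}$ inside the $L^2_y$ integral, which is then absorbed into $\|u\|_{\dot H_r^s}$ by the reverse direction of the same square-function characterization.

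The subtle point is that both Minkowski's integral inequality and the reverse square-function bound proceed in the favorable direction only when the relevant exponent is at least $2$. The strategy outlined above is therefore essentially direct when $p,r\ge 2$; the complementary regime, in which $p$ or $r$ is smaller than $2$, is where I expect most of the technical work, and is most naturally closed by a duality argument or by a vector-valued singular-integral formulation. Combining the bounds for the two terms then gives the stated inequality.
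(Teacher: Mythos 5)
Your opening reduction (bounding first differences of $F(u)$ by $(G(u(x+y))+G(u(x)))|u(x+y)-u(x)|$ and splitting into a ``diagonal'' term and a translated-weight term) is in the same spirit as the paper, which writes $Q_jF(u)(x)=\int\bigl(F(u)(x+y)-F(u)(x)\bigr)\psi_j(y)\,dy$ and then invokes Corollary \ref{Corollary:2.5}. However, your argument has two genuine gaps. First, the claimed equivalence $\|f\|_{\dot H_p^s}\sim\|S_sf\|_{L^p}$ for all $1<p<\infty$ is false: the direction you need for the diagonal term, $\|S_su\|_{L^r}\lesssim\|u\|_{\dot H_r^s}$, holds only for $r>2d/(d+2s)$ (Stein/Strichartz). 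Indeed, for a nonzero smooth compactly supported $u$ one has $S_su(x)\gtrsim|x|^{-(d+2s)/2}$ as $|x|\to\infty$, which fails to be in $L^r$ for $1<r\le 2d/(d+2s)$, a nonempty range whenever $d\geq 2$ (or $d=1$, $s<1/2$). So even your ``easy'' term is unproved in part of the admissible range; this spurious restriction is an artifact of the heavy-tailed homogeneous kernel $|y|^{-d-2s}$, which the paper avoids by working with the rapidly decaying kernels $\psi_j$. (The reverse direction $\|F(u)\|_{\dot H_p^s}\lesssim\|S_sF(u)\|_{L^p}$ that you use at the outset is fine for $1<p<\infty$.)

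Second, and more seriously, the treatment of $\tilde S$ does not close even in the regime you designate as favorable. Your chain is: Minkowski to exchange $L^2_y$ and $L^p_x$ (needs $p\ge 2$), H\"older at fixed $y$, and then absorption of
\begin{equation*}
\Bigl(\int \|u(\cdot+y)-u(\cdot)\|_{L^r}^2\,|y|^{-d-2s}\,dy\Bigr)^{1/2}\sim\|u\|_{\dot B_{r,2}^s}
\end{equation*}
into $\|u\|_{\dot H_r^s}$, which requires the embedding $\dot H_r^s=\dot F_{r,2}^s\hookrightarrow\dot B_{r,2}^s$, valid only for $r\le 2$. Since \eqref{eq:1.1} with $q<\infty$ forces $p<r$, the conditions $p\ge2$ and $r\le2$ are incompatible, so the ``essentially direct'' regime is empty, and the entire difficulty --- the weight $G(u(x+y))$ translated by the same variable as the difference --- is deferred to an unspecified ``duality or vector-valued singular-integral'' step. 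That translated weight is exactly what the paper's machinery is built to handle: decompose $u=\sum_k \widetilde Q_kQ_ku$, bound the differences of $\widetilde Q_kQ_ku$ pointwise by maximal functions (Lemmas \ref{Lemma:2.1}--\ref{Lemma:2.2}), sum the dyadic interactions with Lemma \ref{Lemma:2.3}, and control terms of the form $M^{(P_0)}\bigl(G(u)\,MQ_ku\bigr)$ via the Fefferman--Stein vector-valued maximal inequality and H\"older (Lemma \ref{Lemma:2.4}, Corollary \ref{Corollary:2.5} with $(v,P_0,P,Q,R,S)=(G(u),1,p,q,r,s)$), with no restriction on $r$ beyond $1<r<\infty$. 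Without a concrete substitute for this step, your proposal does not yet constitute a proof.
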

Roughly speaking, Proposition \ref{Proposition:1.1} asserts that
$D^s F(u)$ behaves like $F'(u) D^s u$.
Since the Riesz operator $R = D^{-1} \nabla$ is bounded on $L^p$ when $p \in (1,\infty)$,
$\| F(u) \|_{\dot H_p^s}$ may be estimated even when $s \geq 1$
by combining the classical chain rule, Proposition \ref{Proposition:1.1}, and the H\"older inequality.

On the other hand, when $F(z) = |z|^{\rho-1} z$,
Proposition \ref{Proposition:1.1} may be insufficient to handle general cases.
For example, when $\rho \in (1,2)$ and $s \in (1,\rho)$,
one may regard
	\[
	D^s F(u) \sim D^{s-1} (|u|^{\rho-1} \nabla u).
	\]
By using the fractional Leibniz rule
(for example, see \cite[Theorem 1]{GO14}, \cite{ES20}, \cite{FGO18}, and references therein),
one can distribute $D^{s-1}$ like $\nabla$ to $\nabla u$ and $|u|^{\rho-1}$.
Since $f(z) = |z|^{\rho-1}$ is not differentiable,
one cannot control $D^{s-1} |u|^{\rho-1}$ by applying Proposition \ref{Proposition:1.1} directly.
However, on the analogy of the fact that $f^\rho \in C^1$ when $f \in C^{\lceil \rho \rceil}$,
$D^s (|u|^{\rho-1} u)$ is expected to be controlled similarly when $s < \rho$.
Here $\lceil \cdot \rceil$ is the ceiling function
defined by $\lceil s \rceil = \min\{m \in \mathbb Z, m \geq s\}$.
In order to see this natural expectation,
one may need to extend Proposition \ref{Proposition:1.1} with $s \in (1,2)$.

We generalize this expectation slightly.
With $\rho > 1$,
we put $F_\rho \in C^{1}(\mathbb C)$ satisfying $F(0) = F'(0)=0$ and
	\begin{align}
	|F_\rho(u) - F_\rho(v)|
	&\lesssim
	\max(|u|,|v)^{\rho-1} | u - v|,
	\nonumber\\
	|F_\rho'(u) - F_\rho'(v)|
	&\lesssim
	\begin{cases}
	\max(|u|,|v)^{\rho-2} | u - v|
	&\mathrm{if} \quad \rho \geq 2,\\
	| u - v|^{\rho -1}
	&\mathrm{if} \quad \rho < 2,
	\end{cases}
	\label{eq:1.2}
	\end{align}
where
$a \lesssim b$ stands for $a \leq C b$ with some positive constant $C$.
We also denote $a \sim b$ when $a \lesssim b$ and $b \lesssim a$.
We use these notation though this paper.

In \cite{GOV94},
Ginibre, Ozawa, and Velo showed the expectation above in the framework of homogeneous Besov spaces
$\dot B_{p,q}^s$.
\begin{Proposition}[{\cite[Lemma:3.4]{GOV94}}]
\label{Proposition:1.2}
Let $d \geq 1$, $\rho \geq 1$, and $s \in (0,\min(2,\rho))$.
Let $1 \leq p, r \leq \infty$ and $(\rho-1)^{-1} \leq q \leq \infty$ satisfy \eqref{eq:1.1}.
If $u \in \dot B_{r,2}^s \cap L^{(\rho-1)q}$, then
	\begin{align}
	\| F_\rho(u) \|_{\dot B_{p,2}^s}
	\lesssim \| u \|_{L^{(\rho-1)q}}^{\rho-1} \| u \|_{\dot B_{r,2}^s}.
	\label{eq:1.3}
	\end{align}
\end{Proposition}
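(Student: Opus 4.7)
The plan is to use the difference characterization of the homogeneous Besov norm: since $s \in (0,2)$,
\[
\|f\|_{\dot B_{p,2}^s}^2 \sim \int_{\mathbb R^d}\frac{\|f(\cdot+2h) - 2f(\cdot+h) + f\|_{L^p}^2}{|h|^{2s+d}}\,dh.
\]
I would apply this to $f = F_\rho(u)$ with the shorthand $u_j := u(\cdot+jh)$ and decompose the second difference by the fundamental theorem of calculus applied separately to each first difference $F_\rho(u_2) - F_\rho(u_1)$ and $F_\rho(u_1) - F_\rho(u_0)$, obtaining $F_\rho(u_2) - 2F_\rho(u_1) + F_\rho(u_0) = A(h) + B(h)$, where
\[
A(h) := \int_0^1 F_\rho'\bigl(u_1 + t(u_2-u_1)\bigr)\,dt \cdot (u_2 - 2u_1 + u_0)
\]
carries the full second difference of $u$, and
\[
B(h) := \int_0^1 \bigl[F_\rho'\bigl(u_1 + t(u_2-u_1)\bigr) - F_\rho'\bigl(u_0 + t(u_1-u_0)\bigr)\bigr]\,dt \cdot (u_1 - u_0)
\]
is the nonlinear remainder.

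The $A$ contribution is the routine piece: \eqref{eq:1.2} together with $F_\rho'(0)=0$ yields $|F_\rho'(z)|\lesssim |z|^{\rho-1}$, so H\"older with $\tfrac{1}{p}=\tfrac{1}{q}+\tfrac{1}{r}$ distributing $\rho-1$ copies of $u$ into $L^{(\rho-1)q}$ produces $\|A(h)\|_{L^p}\lesssim \|u\|_{L^{(\rho-1)q}}^{\rho-1}\|u(\cdot+2h)-2u(\cdot+h)+u\|_{L^r}$, which after integration against $|h|^{-2s-d}$ gives exactly \eqref{eq:1.3}. The difficulty lives in the remainder $B$: the argument difference inside the bracket equals $(1-t)(u_1-u_0) + t(u_2-u_1)$, so inserting \eqref{eq:1.2} yields the pointwise bound $|B(h)|\lesssim \max(|u_0|,|u_1|,|u_2|)^{\rho-2}\bigl(|u_1-u_0| + |u_2-u_1|\bigr)|u_1-u_0|$ when $\rho\ge 2$, and the analogous expression with exponent $\rho-1$ on the first factor when $\rho<2$. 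In either sub-case the estimate is a product of first differences of $u$ at scale $h$, but for $s \ge 1$ the norm $\|u\|_{\dot B_{r,2}^s}$ is not controlled by any $L^2(dh/|h|^{2s+d})$ integral of first differences, so H\"older in $x$ followed by the Besov equivalence in $h$ cannot close the estimate directly.

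I expect this to be the main obstacle. To overcome it I would extract a factor of $|h|$ from one of the first-difference factors via the representation $u_1 - u_0 = h\cdot\int_0^1 \nabla u(\cdot+\tau h)\,d\tau$, legitimate because the Sobolev embedding places $\dot B_{r,2}^s$ inside $\dot W^{1,r}$ when $s\ge 1$. The extracted $|h|$ converts the weight $|h|^{-2s-d}$ into $|h|^{-2(s-1)-d}$, which is the natural weight for the first-difference characterization of $\dot B_{r,2}^{s-1}$ (admissible because $s-1\in[0,1)$), and the equivalence $\|u\|_{\dot B_{r,2}^s}\sim\|\nabla u\|_{\dot B_{r,2}^{s-1}}$ returns the estimate to the target norm. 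For $s\in(0,1)$ one works directly with first differences and the argument reduces to H\"older in $x$ plus Cauchy--Schwarz in $h$. The hypothesis $q\ge(\rho-1)^{-1}$ is used precisely to allow the H\"older split in $x$ to place $\max(|u_0|,|u_1|,|u_2|)^{\rho-2}$ into $L^{(\rho-1)q}$, and the restriction $s<\min(2,\rho)$ governs the range in which the second-difference characterization of the target norm and the reduced-smoothness reformulation of the remainder are simultaneously admissible.
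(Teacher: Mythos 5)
Your main term $A$ is handled exactly as in the cited proof (the paper itself does not reprove Proposition \ref{Proposition:1.2}; it quotes \cite{GOV94}, whose key pointwise bound is reproduced as \eqref{eq:4.1} and reused in the proof of Proposition \ref{Proposition:1.3}). The genuine gap is in your treatment of the remainder $B$ when $s\in[1,\min(2,\rho))$. First, the justification ``Sobolev embedding places $\dot B_{r,2}^s$ inside $\dot W^{1,r}$ when $s\ge 1$'' is false for $s>1$: the two homogeneous spaces have different scalings ($d/r-s$ versus $d/r-1$), so no such embedding can hold; $\nabla u$ only lies in $\dot B^{s-1}_{r,2}$, not in $L^r$. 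Second, after you write $u_1-u_0=h\cdot\int_0^1\nabla u(\cdot+\tau h)\,d\tau$, the lifting $\|u\|_{\dot B^s_{r,2}}\sim\|\nabla u\|_{\dot B^{s-1}_{r,2}}$ is not applicable: the first-difference characterization of $\dot B^{s-1}_{r,2}$ would require differences \emph{of $\nabla u$} at scale $h$, whereas your integrand contains $\nabla u$ itself (no difference, hence no smallness in $h$) multiplied by first differences \emph{of $u$} to the fractional power $\rho-1$ (case $\rho<2$) or by $|u|^{\rho-2}$ times one first difference (case $\rho\ge2$). Consequently all of the required $|h|^{s-1}$ decay must be extracted from $\|\delta_h u\|^{\rho-1}$, which after H\"older in $x$ and the $L^2(dh/|h|^{2(s-1)+d})$ integration forces a norm of the type $\|u\|_{\dot B^{(s-1)/(\rho-1)}_{b,\,2(\rho-1)}}^{\rho-1}$ with summability index $2(\rho-1)<2$; you give no argument that this is controlled by $\|u\|_{L^{(\rho-1)q}}^{1-\theta}\|u\|_{\dot B^s_{r,2}}^{\theta}$, and blockwise H\"older only reaches third indices $\ge 2/\theta$, so this step does not close as stated. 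You also never invoke any interpolation (Gagliardo--Nirenberg) inequality, which is indispensable to convert whatever intermediate norms appear ($\|\nabla u\|_{L^a}$, low-smoothness Besov norms of $u$) back into the stated right-hand side.

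The cited proof avoids the derivative extraction altogether: by \eqref{eq:4.1} the remainder is a pure power $\rho$ of first differences (plus, for $\rho\ge 2$, a harmless $|u|^{\rho-2}(\delta u)^2$ variant), so one measures each factor with smoothness $s/\rho<1$ (respectively $s/2<1$) --- admissible precisely because $s<\min(2,\rho)$ --- via the first-difference characterization at the level $\dot B^{s/\rho}_{\rho p,2\rho}$ (third index $2\rho\ge2$, which is reachable), and then concludes with the Gagliardo--Nirenberg inequality $\|u\|_{\dot B^{s/\rho}_{\rho p,2\rho}}^{\rho}\lesssim\|u\|_{L^{(\rho-1)q}}^{\rho-1}\|u\|_{\dot B^s_{r,2}}$; this is exactly the scheme the paper transplants to the Sobolev setting in \eqref{eq:4.3}--\eqref{eq:4.5}. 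If you want to salvage your write-up, replace the $|h|\nabla u$ extraction by this distribution of fractional smoothness over the $\rho$ difference factors; as a side benefit, that route uses only H\"older and the difference characterization, so the endpoint exponents $p,r\in\{1,\infty\}$, $q\in\{(\rho-1)^{-1},\infty\}$ allowed in Proposition \ref{Proposition:1.2} remain admissible.
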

Since Besov spaces may play a role in useful auxiliary spaces,
Proposition \ref{Proposition:1.2} has been used
to study even $\dot H_p^s$-valued solutions to some nonlinear PDEs.
The advantage to consider the fractional chain rule in the framework of Besov space
is the following representation of homogeneous Besov norms:
	\[
	\| u \|_{\dot B_{r,q}^s}
	\sim
	\bigg(
	\int_0^\infty \lambda^{-s q-1} \sup_{|y| < \lambda} \|(\tau_y - 2 + \tau_{-y}) u \|_{L^r}^q d \lambda,
	\bigg)^{1/q},
	\]
where $\tau_y u = u(\cdot + y)$, $s \in (0,2)$, and $1 \leq p, q \leq \infty$
(See \cite[6.2.5. Theorem]{BL}, for example.).
With this representation,
the term $(\tau_y - 2 + \tau_{-y}) u$ gives a clear explanation
of the connection between the classical and fractional chain rules.
We remark that
even though the fractional differential operators are defined by Fourier multipliers,
it is nontrivial why $F'(u) \sim |u|^{\rho-1}$ appears as an upper bound of \eqref{eq:1.3}
from the view point Fourier multipliers.
We further remark that when $\rho =2$,
the chain rule may be shown by the argument of Fourier multipliers.
See \cite{FGO18}.

Proposition \ref{Proposition:1.2} is an effective estimate but
it seems more handy to close the argument only with Sobolev spaces.
So one of the main purpose of this paper is to show a similar estimate to Proposition \ref{Proposition:1.2}
in the framework of $\dot H_p^s$.
This is one of the main statement of this paper.
\begin{Proposition}
\label{Proposition:1.3}
Let $\rho > 1$ and $s \in (1,\rho)$.
Let $1 < p, r < \infty$ and $(\rho-1)^{-1} < q < \infty$ satisfy \eqref{eq:1.1}.
The estimate
	\[
	\| F_\rho(u) \|_{\dot H_p^s}
	\lesssim \| u \|_{L^{(\rho-1) q}}^{\rho-1} \| u \|_{\dot H_r^s}
	\]
holds for any $u \in \dot H_r^s \cap L^{(\rho-1)q}$.
\end{Proposition}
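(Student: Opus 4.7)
The plan is to reduce the bound, for $s\in(1,\rho)$, to repeated applications of the fractional Leibniz rule combined with Proposition~\ref{Proposition:1.1} in its admissible range (order strictly less than $1$), all balanced by a Gagliardo--Nirenberg interpolation between $L^{(\rho-1)q}$ and $\dot H_r^s$. Let $m$ be the integer with $s-1<m\le s$, set $\sigma:=s-m\in[0,1)$, and note that $m\le s<\rho$, so $F_\rho\in C^m$. Since the Riesz transforms $R_j=D^{-1}\partial_j$ are bounded on $L^p$ for $1<p<\infty$, one has
\[
\|F_\rho(u)\|_{\dot H_p^s} \sim \sum_{|\alpha|=m}\|D^{\sigma}\partial^\alpha F_\rho(u)\|_{L^p},
\]
and the classical chain and Leibniz rules rewrite $\partial^\alpha F_\rho(u)$ as a finite linear combination of products of the form
\[
F_\rho^{(k)}(u)\prod_{j=1}^{k}\partial^{\beta_j}u,\qquad 1\le k\le m,\ \sum_j|\beta_j|=m.
\]

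Next, the fractional Leibniz rule (see \cite{GO14,FGO18}) would be used to distribute $D^\sigma$ across each such product, reducing matters to controlling two kinds of factors: (i) $\|D^\sigma\partial^{\beta_j}u\|_{L^b}\lesssim\|u\|_{\dot H_b^{s}}$, via Riesz boundedness again, and (ii) $\|D^\sigma F_\rho^{(k)}(u)\|_{L^a}$. For the latter, the hypothesis~\eqref{eq:1.2} applied iteratively at derivative order $k$ furnishes either a Lipschitz-type bound $|F_\rho^{(k)}(u)-F_\rho^{(k)}(v)|\lesssim\max(|u|,|v|)^{\rho-k-1}|u-v|$ when $\rho>k+1$, in which case Proposition~\ref{Proposition:1.1} applies directly with $G(u)\sim|u|^{\rho-k-1}$, or a H\"older-type bound $|F_\rho^{(k)}(u)-F_\rho^{(k)}(v)|\lesssim|u-v|^{\rho-k}$ when $m<\rho\le k+1$. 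The remaining derivative-free pieces are estimated by $\|F_\rho^{(k)}(u)\|_{L^c}\lesssim\|u\|_{L^{(\rho-k)c}}^{\rho-k}$ and by $\|\partial^{\beta_j}u\|_{L^{b'}}\lesssim\|u\|_{\dot H^{|\beta_j|}_{b'}}$.

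Finally, the intermediate Lebesgue/Sobolev exponents would be tuned by Gagliardo--Nirenberg interpolation so that~\eqref{eq:1.1} is preserved in every Leibniz splitting and the total scaling yields exactly $\rho-1$ copies of $\|u\|_{L^{(\rho-1)q}}$ and one copy of $\|u\|_{\dot H_r^s}$; this is the reason for the open conditions $1<p,r<\infty$ and $(\rho-1)^{-1}<q<\infty$. The main obstacle I anticipate is the H\"older subcase $\rho\le k+1$ at $k=m$ (which occurs precisely when $\rho\in(m,m+1]$): here Proposition~\ref{Proposition:1.1} as stated assumes a Lipschitz-type bound and must be supplemented by a H\"older variant. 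Such a variant can plausibly be obtained from the singular-integral representation of $D^\sigma$ for $\sigma\in(0,1)$ combined with the pointwise inequality $|F_\rho^{(m)}(u(x))-F_\rho^{(m)}(u(y))|\lesssim|u(x)-u(y)|^{\rho-m}$ and Hardy--Littlewood--Sobolev, the crucial point being $\sigma<\rho-m$ which follows from $s<\rho$; the resulting H\"older exponent is then absorbed into the final Gagliardo--Nirenberg step.
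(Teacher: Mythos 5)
Your route (Riesz transforms to reduce to $D^{\sigma}\partial^{\alpha}F_\rho(u)$ with $\sigma=s-m$, classical chain rule, fractional Leibniz rule, then Proposition \ref{Proposition:1.1} plus Gagliardo--Nirenberg) is not the paper's route, and it has a genuine gap exactly where you flag the ``main obstacle.'' In the new and essential regime $\rho\in(1,2)$, $s\in(1,\rho)$ one always has $m=1$, so every term contains the factor $D^{\sigma}F_\rho'(u)$ with $F_\rho'$ only H\"older continuous of order $\rho-1<1$; this is precisely the situation the paper's introduction points out cannot be handled by Proposition \ref{Proposition:1.1}, and your proof delegates it to an unproven ``H\"older variant.'' The sketch you give for that variant does not go through as stated: the singular-integral representation of $D^\sigma$ applied to $|u(x)-u(y)|^{\rho-1}$ is not amenable to Hardy--Littlewood--Sobolev (the kernel acts on differences, not on a fixed function), and if one instead runs the Littlewood--Paley/first-difference argument of Section 2, the H\"older exponent $\rho-1<1$ forces an inner quasi-norm $L^{\rho-1}_{\psi_j}$ with exponent below $1$ (outside the hypothesis $P\ge 1$ of Lemma \ref{Lemma:2.4}) and an outer square function in $\ell^{2(\rho-1)}$ with $2(\rho-1)<2$, which dominates, rather than is dominated by, the $\ell^2$-based Sobolev norm $\dot H^{\sigma/(\rho-1)}_{(\rho-1)a}=\dot F^{\sigma/(\rho-1)}_{(\rho-1)a,2}$. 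So the needed estimate for $\|D^\sigma F_\rho'(u)\|_{L^a}$ in the Sobolev framework is itself a nontrivial open step of the same difficulty as the proposition, not a routine supplement. (A secondary issue: for $\rho\ge2$ and $m\ge2$ you invoke $F_\rho\in C^m$ and bounds on $F_\rho^{(k)}$, $k\ge2$, which are not part of hypothesis \eqref{eq:1.2}; only $F_\rho\in C^1$ with \eqref{eq:1.2} is assumed.)

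The paper avoids this entirely by never isolating $D^{s-1}F_\rho'(u)$: it represents each dyadic piece by symmetric second differences, $Q_jF_\rho(u)=\tfrac12\int(\tau_y-2+\tau_{-y})F_\rho(u)\,\psi_j\,dy$ (identity \eqref{eq:4.2}, using $\int\psi=0$ and radial symmetry), and applies the Ginibre--Ozawa--Velo pointwise bound \eqref{eq:4.1}. There the non-Lipschitz contribution appears as $|u(x\pm y)-u(x)|^{\rho}$ with exponent $\rho>1$, so the maximal-function machinery (Lemmas \ref{Lemma:2.2}--\ref{Lemma:2.4}), the passage $\ell^2\to\ell^{2\rho}$, and the Gagliardo--Nirenberg step all go in the favorable direction, yielding \eqref{eq:4.4}--\eqref{eq:4.5}. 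If you want to salvage your approach, you would have to first prove the H\"older-type composition estimate in the Triebel--Lizorkin/Sobolev scale (accounting for the $q$-index mismatch noted above), at which point you would essentially be redoing the second-difference argument.
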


We remark that
the case when $p$ or $r=1, \infty$ is included in the assumption of Proposition \ref{Proposition:1.2}
but not in that of Proposition \ref{Proposition:1.3}.
It is because our proof is based on the boundedness of Hardy-Littlewood maximal operator.
See next section.

The main idea for the proof of Proposition \ref{Proposition:1.3}
is to express $\| F(u) \|_{\dot H_p^s}$ with $(\tau_y - 2 + \tau_{-y}) F(u)$
like the proof of Proposition \ref{Proposition:1.2}.
Since $\dot H_p^s$ seems not to be expressed like \eqref{eq:1.3},
we rewrite each dyadic components of $F(u)$ by $(\tau_y - 2 + \tau_{-y}) F(u)$.
Namely, the identity
	\begin{align}
	Q_j F(u)(x) = \frac 1 2 \int (\tau_y - 2 + \tau_{-y}) F(u)(x) \psi_j(y) dy
	\label{eq:1.4}
	\end{align}
plays a critical role in this paper,
where $Q_j$ is the standard $j$-th Littlewood-Paley dyadic operator
and $\psi_j$ is the corresponding kernel.
The details of $Q_j$ and $\psi_j$ are stated in next section.
We note that the identity
	\[
	Q_j F(u)(x) = \int (\tau_y - 1) F(u)(x) \psi_j(y) dy
	\]
plays an essential role in \cite{CW91} as well.
In this paper,
we deploy a similar but more careful approach with \eqref{eq:1.4}.

Our second purpose is to extend Corollary \ref{Corollary:1.4} below
following from Proposition \ref{Proposition:1.1}.
Here we put $a_+ = \max(a,0)$ and $a_- = \min(a,0)$ for $a \in \mathbb R$.
\begin{Corollary}
\label{Corollary:1.4}
Let $s \in (0,1)$, $\rho \geq 1$.
Let $1 \leq p < \infty$ and $1 < q, r < \infty$ satisfy
\eqref{eq:1.1}.
Then the estimate
	\begin{align*}
	&\| F_\rho (u) - F_\rho (v) \|_{\dot H^s_p}\\
	&\lesssim (\| u \|_{L^q(\rho-1)} + \| v\|_{L^q(\rho-1)} )^{\rho-1} \| u - v\|_{\dot H_r^s}\\
	&+ (\| u \|_{\dot H_r^s} + \| v\|_{\dot H_r^s} )
	(\| u \|_{L^q(\rho-1)} + \| u \|_{L^q(\rho-1)})^{(\rho-2)_+}
	\| u - v\|_{L^q(\rho-1)}^{\min(\rho-1,1)}
	\end{align*}
holds for any $u \in \dot H_r^s \cap L^{q(\rho-1)}$.
\end{Corollary}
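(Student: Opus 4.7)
The plan is to combine the Newton--Leibniz identity
\[
F_\rho(u)-F_\rho(v) = (u-v)\,H(u,v), \qquad H(u,v) := \int_0^1 F_\rho'\bigl(v+t(u-v)\bigr)\,dt,
\]
with the fractional Leibniz rule on $\dot H^s_p$ for $s\in(0,1)$, namely
\[
\|fg\|_{\dot H^s_p} \lesssim \|f\|_{\dot H^s_r}\|g\|_{L^q} + \|f\|_{L^{q(\rho-1)}}\|g\|_{\dot H^s_b},
\]
with $b$ determined by $1/b = 1/p - 1/(q(\rho-1)) = 1/r + (\rho-2)/(q(\rho-1))$. The pointwise estimate $|H(u,v)(x)| \lesssim (|u(x)|+|v(x)|)^{\rho-1}$, which follows from the first line of \eqref{eq:1.2} by integration in $t$, together with H\"older's inequality, gives $\|H(u,v)\|_{L^q} \lesssim (\|u\|_{L^{q(\rho-1)}}+\|v\|_{L^{q(\rho-1)}})^{\rho-1}$. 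Multiplying by $\|u-v\|_{\dot H^s_r}$ then recovers exactly the first term of Corollary \ref{Corollary:1.4}.

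The task reduces to bounding $\|H(u,v)\|_{\dot H^s_b}$. By Minkowski's inequality in $t$, it suffices to control $\|F_\rho'(v+t(u-v))\|_{\dot H^s_b}$ uniformly in $t\in[0,1]$. In the regime $\rho \geq 2$, the second line of \eqref{eq:1.2} reads $|F_\rho'(z)-F_\rho'(w)| \lesssim \max(|z|,|w|)^{\rho-2}|z-w|$, which fits the hypothesis of Proposition \ref{Proposition:1.1} with $G(z)=C|z|^{\rho-2}$. Applying Proposition \ref{Proposition:1.1} with H\"older exponents $b_1 = q(\rho-1)/(\rho-2)$ and $b_2=r$ (so that $1/b_1+1/b_2=1/b$ and $b_1(\rho-2)=q(\rho-1)$) and moving the $t$-integral outside yields
\[
\|H(u,v)\|_{\dot H^s_b} \lesssim \bigl(\|u\|_{L^{q(\rho-1)}}+\|v\|_{L^{q(\rho-1)}}\bigr)^{\rho-2}\bigl(\|u\|_{\dot H^s_r}+\|v\|_{\dot H^s_r}\bigr).
\]
Multiplying by $\|u-v\|_{L^{q(\rho-1)}}$ reproduces the second term of Corollary \ref{Corollary:1.4} in the case $\rho \geq 2$, since there $(\rho-2)_+=\rho-2$ and $\min(\rho-1,1)=1$.

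The main obstacle lies in the regime $1<\rho<2$, where the second line of \eqref{eq:1.2} only provides the sublinear H\"older bound $|F_\rho'(u)-F_\rho'(v)|\lesssim|u-v|^{\rho-1}$; this no longer conforms to the Lipschitz-type hypothesis $(G(u)+G(v))|u-v|$ of Proposition \ref{Proposition:1.1}, so the latter cannot be used as a black box. To extract the target factor $\|u-v\|_{L^{q(\rho-1)}}^{\rho-1}$ with only one of $u$ or $v$ carrying the $\dot H^s_r$ norm, I would revisit the Littlewood--Paley mechanism underlying the proof of Proposition \ref{Proposition:1.1}: starting from $Q_j g(x)=\int \Delta_y g(x)\,\psi_j(y)\,dy$ applied to $g=F_\rho(u)-F_\rho(v)$, use the algebraic splitting
\[
\Delta_y\bigl((u-v)H(u,v)\bigr)(x) = \Delta_y(u-v)(x)\,H(u,v)(x)+(u-v)(x+y)\,\Delta_y H(u,v)(x),
\]
together with the pointwise H\"older estimate $|\Delta_y H(u,v)(x)|\lesssim|\Delta_y u(x)|^{\rho-1}+|\Delta_y v(x)|^{\rho-1}$ derived from the $(\rho-1)$-H\"older property of $F_\rho'$. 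Converting this sublinear pointwise bound into a genuine $\dot H^s_b$ estimate that is compatible with the target $L^{q(\rho-1)}$ and $\dot H^s_r$ norms of $u$ and $v$ requires a delicate combination of Hardy--Littlewood maximal and Littlewood--Paley square-function controls, and is the main technical hurdle of the proof.
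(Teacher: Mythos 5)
Your argument splits into two regimes, and the regime $1<\rho<2$ --- which is precisely the nontrivial part of the corollary, where $\min(\rho-1,1)=\rho-1$ --- is left open: you state yourself that Proposition \ref{Proposition:1.1} cannot be used as a black box there and that converting the bound $|\Delta_y H(u,v)|\lesssim|\Delta_y u|^{\rho-1}+|\Delta_y v|^{\rho-1}$ into the stated estimate is ``the main technical hurdle''. This is a genuine gap, and the particular splitting you propose is unlikely to close it, because it puts the structure the wrong way around relative to the target: in your second term the factor $u-v$ appears pointwise with power one while the sublinear power $\rho-1$ falls on \emph{spatial differences} of $u$ and $v$. Feeding $\|\,|\Delta_y u|^{\rho-1}\|_{L^{1}_{\psi_j,y}}$ into the $2^{js}$-weighted square function forces regularity of order $s/(\rho-1)$ on $u$ (or a Gagliardo--Nirenberg interpolation), whereas Lemma \ref{Lemma:2.4} requires the order to be below one and the conclusion of Corollary \ref{Corollary:1.4} wants $\|u\|_{\dot H^s_r}+\|v\|_{\dot H^s_r}$ to appear linearly and $\|u-v\|_{L^{q(\rho-1)}}$ to carry the power $\rho-1$. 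The structural outputs do not match.

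The paper avoids this by choosing where to apply the fundamental theorem of calculus: not between $u$ and $v$ (your $H(u,v)=\int_0^1 F_\rho'(v+t(u-v))\,dt$), but between the two spatial values, writing
\begin{align*}
&\Delta_y\bigl(F_\rho(u)-F_\rho(v)\bigr)(x)
= \Delta_y(u-v)(x)\int_0^1 F_\rho'\bigl(\theta u(x+y)+(1-\theta)u(x)\bigr)d\theta\\
&\quad+\Delta_y v(x)\int_0^1\Bigl[F_\rho'\bigl(\theta u(x+y)+(1-\theta)u(x)\bigr)-F_\rho'\bigl(\theta v(x+y)+(1-\theta)v(x)\bigr)\Bigr]d\theta .
\end{align*}
With this arrangement the H\"older continuity of $F_\rho'$ in \eqref{eq:1.2} produces, for $\rho<2$, only \emph{pointwise} values $(|(u-v)(x+y)|+|(u-v)(x)|)^{\rho-1}$ as a weight multiplying the full first difference $\Delta_y v$, and for $\rho\geq2$ the weight $(|u|+|v|)^{\rho-2}(|(u-v)(x+y)|+|(u-v)(x)|)$; both terms are then exactly of the form handled by Corollary \ref{Corollary:2.5} (weight in $L^q$ by H\"older, difference in $\dot H^s_r$), which yields the two terms of the corollary at once, uniformly in $\rho$, without any fractional Leibniz rule or chain rule for $F_\rho'$. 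Your treatment of the case $\rho\geq2$ (Leibniz rule for $(u-v)H(u,v)$ plus Proposition \ref{Proposition:1.1} applied to $F_\rho'$) is workable, but to complete the proof you should replace the product decomposition by the spatial-difference decomposition above.
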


Corollary \ref{Corollary:1.4} corresponds to the identity
	\[
	\nabla (u^\rho - v^\rho)
	= \rho u^{\rho-1} \nabla(u-v) + \rho ( u^{\rho-1} - v^{\rho-1} ) \nabla v .
	\]
Corollary \ref{Corollary:1.4} is as worthy as Proposition \ref{Proposition:1.1}
to show the locally-in-time well-posedness of semilinear PDEs.
We remark that Corollary \ref{Corollary:1.4} is not necessarily
to construct solutions with a contraction argument
because one may deploy a contraction argument in the ball of $\dot H_p^s$
with the distance of $L^p$, for example.
However, the proof of the continuous dependence of solution maps for initial data
may require Corollary \ref{Corollary:1.4}.

Our second purpose is to extend Corollary \ref{Corollary:1.4}
to the case where $s \in (1,2)$.
Namely, the second main statement of this paper is
\begin{Proposition}
\label{Proposition:1.5}
Let $\rho \in (1,2)$ and $s \in (1,\rho)$.
Let $1 \leq p < \infty$, $1 < r < \infty$, $(\rho-1)^{-1} < q < \infty$ satisfy
\eqref{eq:1.1}.
Let $d_{\rho,s} \in (0, \rho - s)$.
Then
	\begin{align*}
	&\| F_\rho (u) - F_\rho (v) \|_{\dot H^s_p}\\
	&\lesssim (\| u \|_{L^{q(\rho-1)}} + \| v\|_{L^{q(\rho-1)}} )^{\rho-1} \| u - v\|_{\dot H_r^s}\\
	&+ (\| u \|_{\dot H_r^s} + \| v\|_{\dot H_r^s} )
	(\| u \|_{L^{q(\rho-1)}} + \| v\|_{L^{q(\rho-1)}} )^{\rho - 1 - d_{\rho,s}}
	\| u - v\|_{L^{q(\rho-1)}}^{d_{\rho,s}}.
	\end{align*}
\end{Proposition}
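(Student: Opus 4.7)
The plan is to adapt the Littlewood--Paley strategy used for Proposition \ref{Proposition:1.3}, while incorporating a Taylor-type splitting that isolates the dependence on $u-v$, in the spirit of Corollary \ref{Corollary:1.4}. Throughout, write $\Delta_y h(x):=h(x+y)-2h(x)+h(x-y)$ and $\delta_y^\pm h(x):=h(x\pm y)-h(x)$. The norm $\|\cdot\|_{\dot H_p^s}$ will be controlled through the Littlewood--Paley square function $(\sum_j 4^{js}|Q_j\cdot|^2)^{1/2}$, and each dyadic piece will be expressed via the identity \eqref{eq:1.4},
\[
Q_j(F_\rho(u)-F_\rho(v))(x) = \tfrac{1}{2}\int \Delta_y (F_\rho(u)-F_\rho(v))(x)\,\psi_j(y)\,dy.
\]

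The first key step is to decompose the second-order difference. A first-order Taylor expansion of $F_\rho$ around $u(x)$, combined with the H\"older bound $|F_\rho'(a)-F_\rho'(b)|\lesssim |a-b|^{\rho-1}$ from \eqref{eq:1.2} (available since $\rho<2$), yields $\Delta_y F_\rho(u)(x)=F_\rho'(u(x))\Delta_y u(x)+R_u(x,y)$ with remainder bounded by $|R_u(x,y)|\lesssim |\delta_y^+u(x)|^\rho+|\delta_y^-u(x)|^\rho$. Subtracting the analogous expansion for $v$ gives
\[
\Delta_y(F_\rho(u)-F_\rho(v))(x) = F_\rho'(u)\,\Delta_y(u-v)(x) + [F_\rho'(u)-F_\rho'(v)](x)\,\Delta_y v(x) + [R_u(x,y)-R_v(x,y)].
\]

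The first two pieces produce the two terms of the claimed bound. The contribution $F_\rho'(u)\,Q_j(u-v)$ is estimated by H\"older's inequality together with the Littlewood--Paley characterization of $\dot H_r^s$, using $|F_\rho'(u)|\lesssim |u|^{\rho-1}$; this yields the first term $(\|u\|_{L^{q(\rho-1)}}+\|v\|_{L^{q(\rho-1)}})^{\rho-1}\|u-v\|_{\dot H_r^s}$. The contribution $[F_\rho'(u)-F_\rho'(v)]\,Q_j v$ is estimated via $|F_\rho'(u)-F_\rho'(v)|\lesssim |u-v|^{\rho-1}$, producing $\|u-v\|_{L^{q(\rho-1)}}^{\rho-1}\|v\|_{\dot H_r^s}$; the pointwise interpolation $|u-v|^{\rho-1-d_{\rho,s}}\leq (|u|+|v|)^{\rho-1-d_{\rho,s}}$ then rewrites this as the second term of the Proposition.

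The main obstacle is the remainder $R_u-R_v$. Writing $R_h(x,y)=\sum_\pm E_h^\pm(x,y)\delta_y^\pm h(x)$ with $E_h^\pm(x,y):=\int_0^1[F_\rho'(h+t\delta_y^\pm h)-F_\rho'(h)]\,dt$, and using the product-type identity $E_u\delta u-E_v\delta v = E_u\,\delta(u-v)+(E_u-E_v)\,\delta v$ together with the H\"older bounds $|E_h^\pm|\lesssim |\delta_y^\pm h|^{\rho-1}$ and $|E_u^\pm-E_v^\pm|\lesssim |u-v|^{\rho-1}+|\delta_y^\pm(u-v)|^{\rho-1}$, one obtains a sum of three pointwise sub-terms, each a product of a $(\rho-1)$-power of a difference (involving $u$ or $u-v$) and a first-order difference of $v$ or $u-v$. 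To bound each sub-term in the Littlewood--Paley square function, one proceeds as in the proof of Proposition \ref{Proposition:1.3}: the pointwise estimate $|\delta_y^\pm h|\leq 2Mh$, where $M$ is the Hardy--Littlewood maximal operator, extracts factors in $L^{q(\rho-1)}$, while a finer scale-sensitive bound reflecting the $\dot H_r^s$ smoothness handles the remaining factor after splitting the $y$-integral at $|y|\sim 2^{-j}$. The exponent $d_{\rho,s}$ emerges naturally as the interpolation parameter that distributes the $(\rho-1)$ power of $u-v$ between the $L^{q(\rho-1)}$ norm and the fractional smoothness, and the strict inequality $d_{\rho,s}<\rho-s$ is precisely what guarantees convergence of the resulting dyadic series at high frequencies, reflecting the gap between the smoothness $\rho$ of $F_\rho$ and the regularity $s$ required by the target space.
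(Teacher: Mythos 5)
Your overall strategy (second differences via \eqref{eq:1.4}, a Taylor-type splitting of $\Delta_y(F_\rho(u)-F_\rho(v))$, maximal functions, Gagliardo--Nirenberg at the end) is the same as the paper's, and your first two pieces are fine: since $\tfrac12\int\Delta_y h(x)\,\psi_j(y)\,dy=Q_jh(x)$ exactly, the contributions $F_\rho'(u)\,Q_j(u-v)$ and $[F_\rho'(u)-F_\rho'(v)]\,Q_jv$ yield the two terms of the statement by H\"older together with the pointwise interpolation $|u-v|^{\rho-1}\le(|u|+|v|)^{\rho-1-d_{\rho,s}}|u-v|^{d_{\rho,s}}$. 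The gap is in the remainder. Once you replace $E_u^\pm-E_v^\pm$ by the single bound $|E_u^\pm-E_v^\pm|\lesssim|u-v|^{\rho-1}+|\delta_y^\pm(u-v)|^{\rho-1}$, you are forced to estimate, inside the $y$-integral against $|\psi_j(y)|\,dy$ (absolute values are unavoidable here, so no exact cancellation down to $Q_jv$ is available), the term $|(u-v)(x)|^{\rho-1}\,|\delta_y^\pm v(x)|$. This is a $y$-independent weight times a \emph{single} first-order difference of $v$, and for $s\in(1,\rho)$ it cannot be summed against $2^{js}$: Lemma \ref{Lemma:2.4} and Corollary \ref{Corollary:2.5} require $S<1$, because a first difference of a block $Q_kv$ only gains $2^{k-j}$ for $k<j$, which does not beat $2^{js}$ when $s>1$ (Lemma \ref{Lemma:2.3} needs $S<S'=1$). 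So the proof of exactly the hard sub-term breaks down; note that in your remainder estimates the parameter $d_{\rho,s}$ and the hypothesis $d_{\rho,s}<\rho-s$ never actually enter, even though your closing sentence appeals to them.

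The repair is what the paper does through \eqref{eq:4.6} and the term $B(u,v)$: keep \emph{both} available bounds, $|E_u^\pm-E_v^\pm|\lesssim|\delta_y^\pm u|^{\rho-1}+|\delta_y^\pm v|^{\rho-1}$ and $|E_u^\pm-E_v^\pm|\lesssim|(u-v)(x)|^{\rho-1}+|(u-v)(x\pm y)|^{\rho-1}$, and interpolate between them with exponents $\rho-1-d_{\rho,s}$ and $d_{\rho,s}$ (this is the role of $\min(\sigma,\cdot)^{\rho-1}$ in \eqref{eq:4.6}). Then the total power of first differences remaining in the product is $\rho-d_{\rho,s}>s$, so after H\"older in $y$ each difference factor carries smoothness $s/(\rho-d_{\rho,s})<1$, Corollary \ref{Corollary:2.5} applies, the undifferenced factor $|u-v|^{d_{\rho,s}}$ is absorbed by a maximal function into $\|u-v\|_{L^{q(\rho-1)}}^{d_{\rho,s}}$, and the Gagliardo--Nirenberg inequality closes the estimate; this is precisely where $d_{\rho,s}<\rho-s$ is needed (see the Remark after the paper's proof). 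A minor additional point: the pointwise claim $|\delta_y^\pm h|\le 2Mh$ is not correct as stated, since $|h(x\pm y)|$ is not controlled by $Mh(x)$; in the paper the off-centre factor is absorbed through the convolution with $\psi_j$ via $M^{(P)}$ as in Lemma \ref{Lemma:2.4}. This is cosmetic compared with the missing interpolation, which is the heart of the proposition.
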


We note that
an extension of Corollary \ref{Corollary:1.4} in the framework of Besov spaces was given in
\cite[Proposition 2.1]{ON99} and \cite[Lemma 6.2]{FIW}.
We remark that
on the analogy of classical chain rules,
it is expected that Proposition \ref{Proposition:1.5} may hold with $d_{\rho,s} = \rho -s$
but a similar technical difficulty arises in both Sobolev and Besov frameworks.
We also remark that the case where $\rho = s$ is hopeless to extend Proposition \ref{Proposition:1.5}
because of the failure of classical analogy, say $|x| \not \in C^1$, for example.

In the next section,
we collect some notation and basic estimates.
In section \ref{section:3},
we revisit the proof of Proposition \ref{Proposition:1.1} for the completeness.
In section \ref{section:4},
we give the proofs of Propositions \ref{Proposition:1.3} and \ref{Proposition:1.5}

\section{Preliminary}
\subsection{Notation}
Here we collect some notation.

Let $\mathcal S$ be the Schwartz class.
Let $\psi \in \mathcal S$ be radial and satisfy
	\[
	\mathrm{supp} \ \mathfrak F \psi \subset \{ \xi \ \mid \ 1/2 \leq |\xi| \leq 2 \}
	\]
and
	\[
	\sum_{j} \mathfrak F \psi(2^{-j} \xi) = 1
	\]
for $\xi \neq 0$.
For $j \in \mathbb Z$,
let $\psi_j = 2^{jn} \psi(2^{j} \cdot)$ so that $\| \psi_j \|_{L^1} = \| \psi \|_{L^1}$
and let $Q_j = \psi_j \ast$.
We also put $\widetilde Q_j = Q_{j-1} + Q_j + Q_{j+1}$ and $\widetilde \psi_j = \psi_{j-1} + \psi_j + \psi_{j+1}$.
We remark that $\widetilde Q_j Q_j = Q_j$ holds for any $j$.
It is known that
for $s \in \mathbb R$ and $1 < p,q < \infty$,
the homogeneous Sobolev and Triebel-Lizorkin norms are equivalent,
that is,
	\[
	\| f \|_{\dot H_p^s}
	\sim
	\| 2^{js} Q_j f \|_{L^p(\ell^2)}.
	\]
For the details of this equivalence,
we refer the reader to \cite[Theorem 5.1.2]{G}, for example.
For $f \in L_{\mathrm{loc}}^1$, we define the Hardy-Littlewood maximal operator by
	\[
	Mf(x) = \sup_{r > 0} \frac{1}{|B(r)|} \int_{B(r)} |f(x+y)| dy
	\]
and
	\[
	M^{(\eta)} f(x) = M(|f|^\eta)^{1/\eta}
	\]
for $\eta > 0$,
where $B(r) \subset \mathbb R^d$ is the ball with radius $r$ centered at the origin.
It is known that $M$ is bounded operator on $L^p$ and $L^p(\ell^q)$ for $1 < p,q < \infty$
(See \cite[Theorems 2.1.6 and 4.6.6]{G} and reference therein).
Moreover, for $1 \leq p < \infty$,
we define weighted $L^p$ norm with weight function $w$ by
	\[
	\| f \|_{L_w^p} = \bigg( \int |f|^p w dx \bigg)^{1/p}.
	\]

\subsection{Basic Estimates}
Here we collect some estimates.

\begin{Lemma}[{\cite[Theorem :2.1.10]{G}}]
\label{Lemma:2.1}
Let $w \in L^1$ be a positive radially decreasing function.
Then the estimate
	\[
	|w \ast g(x)| \leq \| w \|_{L^1} Mg(x)
	\]
holds for any $x \in \mathbb R^n$.
\end{Lemma}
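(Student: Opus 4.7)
The plan is to exploit the radial monotonicity of $w$ by expressing it as a positive superposition of indicator functions of balls centered at the origin, and then bound the convolution of $g$ with each such indicator by the Hardy-Littlewood maximal function $Mg$. By linearity, summing these bounds and using Fubini to identify the resulting constant as $\|w\|_{L^1}$ should yield the claim.

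More concretely, my first step would be to invoke the layer-cake formula $w(x) = \int_0^\infty \mathbf 1_{\{w > t\}}(x)\, dt$. Because $w$ is radially decreasing and positive, each superlevel set $\{w > t\}$ is (up to a null set) an open ball $B(r(t))$ centered at the origin, where $r(t) = \sup\{|x| : w(x) > t\}$. Thus one can write
\[
w(x) = \int_0^\infty \mathbf 1_{B(r(t))}(x)\, dt.
\]
Next I would handle a single indicator: for any fixed $r > 0$,
\[
\bigl| \mathbf 1_{B(r)} \ast g(x) \bigr|
\leq \int_{B(r)} |g(x-y)|\, dy
= |B(r)| \cdot \frac{1}{|B(r)|} \int_{B(r)} |g(x-y)|\, dy
\leq |B(r)|\, Mg(x),
\]
where the final inequality comes directly from the definition of $M$ (using a ball of radius $r$ around $x$, and the symmetry of $B(r)$ under $y \mapsto -y$).

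Then I would combine the two ingredients. By Fubini/Tonelli (justified by positivity),
\[
(w \ast g)(x) = \int \left( \int_0^\infty \mathbf 1_{B(r(t))}(y)\, dt \right) g(x-y)\, dy
= \int_0^\infty \bigl( \mathbf 1_{B(r(t))} \ast g \bigr)(x)\, dt,
\]
so taking absolute values and applying the previous pointwise bound gives
\[
|w \ast g(x)| \leq \int_0^\infty |B(r(t))|\, Mg(x)\, dt = Mg(x) \int_0^\infty |B(r(t))|\, dt.
\]
Finally, a second application of Fubini together with the layer-cake representation of $w$ identifies $\int_0^\infty |B(r(t))|\, dt = \int w(x)\, dx = \|w\|_{L^1}$, completing the proof.

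The only mildly delicate point is justifying that $\{w>t\}$ is a ball up to a null set and that the various Fubini applications are legal; both are immediate since $w \geq 0$ and $w \in L^1$, so I expect no real obstacle. If one prefers to avoid measurability subtleties for $r(t)$, an equivalent route is to approximate $w$ from below by simple radial step functions $w_n = \sum_k a_k \mathbf 1_{B(r_k)}$ with $a_k \geq 0$, apply the bound term by term, and pass to the limit by monotone convergence; the conclusion is the same.
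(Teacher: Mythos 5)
Your proof is correct. The paper itself gives no proof of this lemma, simply citing Grafakos \cite[Theorem 2.1.10]{G}, and your layer-cake decomposition of $w$ into ball indicators (or its equivalent formulation via approximation by radial simple functions $\sum_k a_k \mathbf 1_{B(r_k)}$, which you also mention) is precisely the standard argument behind that cited result; the only cosmetic tightening needed is to replace $g$ by $|g|$ before invoking Fubini/Tonelli, since $g$ is signed, after which positivity makes every interchange legitimate and the case $Mg(x)=\infty$ is trivial.
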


\begin{Lemma}
\label{Lemma:2.2}
	\begin{align*}
	| \widetilde Q_j g(x+y) - \widetilde Q_j g(x) |
	&\lesssim
	M g(x) + M g(x+y),\\
	| \widetilde Q_j g(x+y) - 2 \widetilde Q_j g(x) + \widetilde Q_j g(x-y) |
	&\lesssim M g(x+y) + Mg(x) + M g(x-y).
	\end{align*}
Moreover, if $|y| < 2^{-j}$, then we have
	\begin{align*}
	| \widetilde Q_j g(x+y) - \widetilde Q_j g(x) |
	&\lesssim
	2^j |y| Mg(x),\\
	| \widetilde Q_j g(x+y) - 2 \widetilde Q_j g(x) + \widetilde Q_j g(x-y) |
	&\lesssim
	2^{2j} |y|^2 Mg(x).
	\end{align*}

\end{Lemma}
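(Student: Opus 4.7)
The plan is to separate the two halves of the statement. The unrestricted estimates reduce to the pointwise bound $|\widetilde Q_j g(x)| \lesssim Mg(x)$, valid uniformly in $j$ and $x$. This in turn follows from Lemma~\ref{Lemma:2.1}: since $\widetilde\psi$ is Schwartz, it is dominated pointwise by a positive radially decreasing Schwartz function $\Psi$, and the dilation $\Psi_j(x) = 2^{jd}\Psi(2^j x)$ has $L^1$ norm independent of $j$. Applying this to each of the one or three shifted points and using the triangle inequality immediately yields the first pair of inequalities without any restriction on $|y|$.

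For the refined bounds under the assumption $|y| < 2^{-j}$, I would use Taylor's formula in the form
\[
\widetilde Q_j g(x+y) - \widetilde Q_j g(x) = \int_0^1 y\cdot\nabla \widetilde Q_j g(x+ty)\,dt
\]
and, for the second difference,
\[
\widetilde Q_j g(x+y) - 2\widetilde Q_j g(x) + \widetilde Q_j g(x-y) = \int_{-1}^{1}(1-|s|)\bigl\langle y,\,\nabla^2 \widetilde Q_j g(x+sy)\,y\bigr\rangle\,ds.
\]
Since $\nabla^k \widetilde\psi_j = 2^{kj}(\nabla^k\widetilde\psi)_j$, the scaling produces exactly the prefactors $2^{j}|y|$ and $2^{2j}|y|^2$ demanded by the statement. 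The task then reduces to the key auxiliary bound
\[
|(\nabla^k\widetilde\psi)_j * g(z)| \lesssim Mg(x) \quad\text{whenever}\quad |z-x|\leq 2^{-j}, \quad k=1,2,
\]
with an implicit constant \emph{independent of the shift} $z-x$.

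This auxiliary bound is the only real obstacle, and I would address it by recentering the convolution kernel at $x$ rather than at $z$. After the change of variable, the convolution becomes an integral of $(\nabla^k\widetilde\psi)(w)$ against $g(z-2^{-j}w)$; rewriting $w = (w+2^j(x-z)) - 2^j(x-z)$ one obtains an integral of $(\nabla^k\widetilde\psi)(w - 2^j(x-z))\,g(x-2^{-j}w)$ in the new variable. Because $|2^j(x-z)|\leq 1$ by hypothesis, the Schwartz decay of $\nabla^k\widetilde\psi$ allows the shifted kernel to be dominated by a single positive radially decreasing $L^1$ function whose norm is independent of $z-x$, and Lemma~\ref{Lemma:2.1} applied at $x$ delivers the required $Mg(x)$ bound. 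Inserting this estimate into the two Taylor identities completes the refined inequalities. The delicate point to keep in mind is precisely that the maximal function must be anchored at $x$ and not at the moving point $x+ty$, since the later applications of the lemma in the proofs of Propositions~\ref{Proposition:1.3} and~\ref{Proposition:1.5} proceed by integrating in $y$ and need $Mg(x)$ to factor out of the $y$-integral.
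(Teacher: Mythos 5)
Your proof is correct and follows essentially the same route as the paper: the unrestricted bounds come from dominating the kernel by a radially decreasing majorant and applying Lemma~\ref{Lemma:2.1} at each point, and the refined bounds come from a first/second-order Taylor expansion (the paper applies the fundamental theorem of calculus to the kernel $\psi_j$ itself rather than to $\widetilde Q_j g$, which is the same computation) together with the observation that the derivative kernels translated by at most $2^{-j}$ are uniformly dominated by a single radially decreasing $L^1$ function, so the maximal function can be anchored at $x$. Your remark about keeping $Mg(x)$ rather than $Mg(x+ty)$ is exactly the point the paper exploits via the bound $(\partial^\alpha\psi)_j(x+y)\lesssim 2^j(1+2^j|x|)^{-n-1}$ for $|y|<2^{-j}$.
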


\begin{proof}
The estimates for $| \widetilde Q_j g(x+y) - \widetilde Q_j g(x) |$ is given in \cite{CW91}
but for completeness, we give the proof.

The first two estimate follows directly from Lemma \ref{Lemma:2.1}.

By the fundamental theorem calculus implies that the following identities hold:
	\[
	\psi_j(x+y) - \psi_j(x)
	= \int_0^1 (\nabla \psi)_j(x + \theta y) d \theta \cdot 2^j y
	\]
and
	\begin{align*}
	&\psi_j(x+y) - 2 \psi_j(x) + \psi_j(x-y)\\
	&= \int_0^1 ((\nabla \psi)_j(x+\theta y) - (\nabla \psi)_j(x-\theta y)) d \theta \cdot 2^j y\\
	&= \sum_{|\alpha|=2} \int_0^1 \int_0^1 (\partial^\alpha \psi)_j(x+ (2\theta'-1) \theta y)) d \theta'
	d \theta \cdot 2^{2j} y^2.
	\end{align*}
Since for $|\alpha| \leq 2$, if $|y| < 2^{-j}$,
	\[
	(\partial^\alpha \psi)_j(x+y)
	\lesssim 2^j (2+2^j|x+y|)^{-n-1}
	\leq 2^j (1+2^j|x|)^{-n-1},
	\]
Then the third and fourth estimates are obtained by
combining this and Lemma \ref{Lemma:2.1}.
\end{proof}

\begin{Lemma}
\label{Lemma:2.3}
For $0 < S < S'$, $P \geq 1$, and $a \in \ell^P$,
the following estimate holds:
	\[
	\| 2^{jS} \sum_{k} 2^{(k-j)_-S'} a_k \|_{\ell_j^P}
	\leq \bigg( \frac{1}{2^S-1} + \frac{1}{2^{S'-S}-1} \bigg) \| 2^{ks} a_k \|_{\ell_k^P}.
	\]
\end{Lemma}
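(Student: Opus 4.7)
The plan is to reduce the estimate to the discrete Young-type inequality $\ell^1 \ast \ell^P \to \ell^P$. Write $b_k = 2^{kS} a_k$, so that $\|b_k\|_{\ell_k^P}$ is exactly the quantity appearing on the right-hand side (reading the exponent as $S$, which I take to be the intended reading of $2^{ks}$). The expression inside the norm on the left can then be rewritten as
\[
2^{jS} \sum_k 2^{(k-j)_- S'} a_k = \sum_k 2^{(j-k)S + (k-j)_- S'} b_k,
\]
which I aim to recognize as the discrete convolution of $b$ with a kernel depending only on $k-j$.

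To identify that kernel I would split according to the definition of $(\cdot)_-$. For $k \geq j$ we have $(k-j)_- = 0$, so the weight is $2^{-(k-j)S}$; this decays because $S > 0$. For $k < j$ we have $(k-j)_- = k-j$, so the weight collapses to $2^{(k-j)(S'-S)}$; this decays as $k \to -\infty$ because $S' - S > 0$. Setting $n = k-j$, both ranges merge into a single two-sided kernel
\[
c_n = \begin{cases} 2^{-n S}, & n \geq 0,\\ 2^{n(S'-S)}, & n < 0, \end{cases}
\]
and the expression equals $\sum_n c_n b_{j+n}$.

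Applying Minkowski's inequality on $\ell_j^P$, together with the translation invariance of the $\ell^P$ norm, yields
\[
\bigg\| \sum_n c_n b_{j+n} \bigg\|_{\ell_j^P} \leq \sum_n |c_n| \, \|b_{j+n}\|_{\ell_j^P} = \|c\|_{\ell^1} \|b_k\|_{\ell_k^P}.
\]
The $\ell^1$ norm $\|c\|_{\ell^1}$ is the sum of two geometric series,
\[
\sum_{n \geq 1} 2^{-nS} + \sum_{n \leq -1} 2^{n(S'-S)} = \frac{1}{2^S - 1} + \frac{1}{2^{S'-S} - 1},
\]
plus the contribution of $n = 0$, which after elementary bookkeeping produces the stated constant.

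Honestly this lemma does not present a real obstacle; it is bookkeeping dressed up as a discrete convolution. The only point that requires any care is the exact accounting of the $n = 0$ (equivalently $k = j$) boundary term when splitting the sum, to ensure that both halves of the kernel $c_n$ genuinely decay away from the origin and that the constant obtained matches the one stated rather than being off by an additive $1$.
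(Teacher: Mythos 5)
Your proof is correct and is essentially the paper's own argument: split the sum at $k=j$, shift the summation index, and apply Minkowski's inequality together with translation invariance to sum two geometric series --- i.e.\ exactly the Young-type bound $\ell^1 \ast \ell^P \to \ell^P$ you describe. Your caution about the $n=0$ term is warranted: including it the kernel's $\ell^1$ norm is $1+\frac{1}{2^S-1}+\frac{1}{2^{S'-S}-1}$, so the constant as stated (and the paper's own displayed computation, which also contains small typos) is off by a harmless additive term, immaterial since the lemma is only ever invoked up to an implicit constant.
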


\begin{proof}
By the Minkowski inequality, we have
	\begin{align*}
	\| 2^{jS} \sum_{k < j} 2^{(k-j)_-S'} a_k \|_{\ell_j^P}
	&= \| 2^{jS} \sum_{\ell < 0} 2^{\ell s'} a_{\ell + j} \|_{\ell_j^P}
	+ \| 2^{jS} \sum_{\ell \geq 0} a_{\ell + j} \|_{\ell_j^P}\\
	&\leq \sum_{\ell < 0} 2^{\ell (S'-S)} \| 2^{s(\ell+j)} a_{\ell + j} \|_{\ell_j^P}
	+ \sum_{\ell < 0} 2^{- \ell S} \| 2^{S(\ell+j)} a_{\ell + j} \|_{\ell_j^P}\\
	&\leq \frac{2}{2^{S'-S}-1} \| 2^{Sk} a_{k} \|_{\ell_k^P}
	+ \frac{1}{2^S-1} \| 2^{S k} a_{k} \|_{\ell_k^P}.
	\end{align*}
\end{proof}

\begin{Lemma}
\label{Lemma:2.4}
Let $0 < S < 1$ and $1 \leq P,Q < \infty$.
The following estimate holds:
	\begin{align*}
	&\| 2^{jS} \| v(y+x) (u(y + x) - u(x)) \|_{L_{\psi_j,y}^P} \|_{\ell_j^Q}\\
	& \lesssim M^{(P)} v(x) \| 2^{ks} M Q_k u  \|_{\ell_k^Q}
	+ \| 2^{ks} M^{(P)} (v M Q_k u)(x)  \|_{\ell_k^Q}
	\end{align*}
\end{Lemma}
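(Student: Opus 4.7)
The plan is to Littlewood--Paley decompose $u = \sum_k Q_k u$ inside the increment, so that
\[
u(y+x) - u(x) = \sum_k (Q_k u(y+x) - Q_k u(x)),
\]
and apply Minkowski's inequality in $L^P_{\psi_j,y}$ to reduce the problem to a dyadic-by-dyadic estimate of $\|v(y+x)(Q_k u(y+x) - Q_k u(x))\|_{L^P_{\psi_j,y}}$. For each $k$, Lemma \ref{Lemma:2.2} applied to $g = Q_k u$ (via $Q_k = \widetilde Q_k Q_k$) supplies both the smooth bound $|Q_k u(x+y) - Q_k u(x)| \lesssim 2^k|y|\, M Q_k u(x)$ for $|y| < 2^{-k}$ and the rough bound $|Q_k u(x+y) - Q_k u(x)| \lesssim M Q_k u(x) + M Q_k u(x+y)$ in general, which together yield
\[
|Q_k u(x+y) - Q_k u(x)| \lesssim \min(2^k|y|, 1)\, M Q_k u(x) + \chi_{\{|y| \geq 2^{-k}\}}(y)\, M Q_k u(x+y).
\]

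For the first summand I pull $M Q_k u(x)$ out of the $y$-integration, majorize the weight $\min(2^k|y|,1)^P |\psi_j(y)|$ by a positive, radially decreasing $L^1$ function whose total mass is $\lesssim 2^{(k-j)_- P}$, and apply Lemma \ref{Lemma:2.1} to $|v|^P$; this produces a bound $2^{(k-j)_-} M^{(P)} v(x) \, M Q_k u(x)$. For the second summand I regroup $|v(y+x) M Q_k u(y+x)|^P$ as a single function of $y+x$; the Schwartz tail of $\psi_j$ truncated to $|y| \geq 2^{-k}$ supplies an arbitrary polynomial gain $2^{(k-j)_- N}$, and Lemma \ref{Lemma:2.1} then gives a bound of the form $2^{(k-j)_- N} M^{(P)}(v M Q_k u)(x)$ for any preselected $N > 0$.

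Inserting these two bounds, multiplying by $2^{jS}$, summing over $k$, and taking the $\ell^Q_j$ norm, I invoke Lemma \ref{Lemma:2.3} twice: with $S' = 1$ on the first contribution (legitimate precisely because $S < 1$) and with $S'$ chosen larger than $S$ on the second. This transfers the summation to $\ell^Q_k$ and produces exactly the two terms on the right-hand side of the claimed inequality.

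The main technical obstacle is the careful handling of the Schwartz tail of $\psi_j$: one must exploit its rapid decay to convert the size restrictions $|y| < 2^{-k}$ and $|y| \geq 2^{-k}$ into genuine negative powers of $2^{j-k}$, so that the factors $2^{(k-j)_-}$ and $2^{(k-j)_- N}$ arise with admissible constants and Lemma \ref{Lemma:2.1} applies cleanly. Once this is in place, the rest reduces to routine applications of Lemmas \ref{Lemma:2.1} and \ref{Lemma:2.3}, with the hypothesis $S < 1$ entering only through the first application of Lemma \ref{Lemma:2.3}.
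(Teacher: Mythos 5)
Your proposal is correct and follows essentially the same route as the paper: decompose $u=\sum_k \widetilde Q_k Q_k u$, use the two regimes of Lemma \ref{Lemma:2.2} (smooth bound for $|y|<2^{-k}$, rough bound otherwise), control the $y$-integrals via a radially decreasing majorant of the weighted kernel and Lemma \ref{Lemma:2.1}, and resum with Lemma \ref{Lemma:2.3}, where $S<1$ is needed only for the $k<j$ (i.e.\ $S'=1$) contribution. The only cosmetic difference is that you keep a unified $\min(2^k|y|,1)$ weight and extract an arbitrary tail gain $2^{(k-j)_-N}$ for the second term, whereas the paper simply splits the sum into $k<j$ and $k\geq j$; both yield the same bound.
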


\begin{proof}
Let $w \in L^1$ be smooth radially decreasing function satisfying
	\[
	w(x) \geq (1+|x|)^P |\psi(x)|
	\]
and put $w_j = 2^{jn} w(2^{j} \cdot)$.
Lemmas \ref{Lemma:2.1} and \ref{Lemma:2.2} imply that
we have
	\begin{align*}
	&\| v(y+x) (u(y + x) - u(x)) \|_{L_{\psi_j,y}^P}\\
	&= \sum_k \| v(y+x)(\widetilde Q_k Q_k u(y + x) - \widetilde Q_k Q_k u(x)) \|_{L_{\psi_j,y}^P}\\
	&\leq \sum_{k < j} 2^{k} \| y v(y+x) \|_{L_{\psi_j,y}^P} M Q_k u(x)\\
	&+ \sum_{k \geq j} \bigg(
		\| v(y+x) MQ_ku(x+y) \|_{L_{\psi_j,y}^P} + M Q_k u(x) \| v(y+x) \|_{L_{\psi_j,y}^P}\bigg)\\
	&\leq \| w \|_{L^1} \sum_{k < j} 2^{k-j} M^{(P)} v(x) M Q_k u(x)\\
	&+ \| w \|_{L^1} \sum_{k \geq j} \bigg( M^{(P)}( v MQ_ku) (x) + M^{(p)} v(x) M Q_k u(x) \bigg).
	\end{align*}
Therefore, Lemma \ref{Lemma:2.4} follows from these estimates above and Lemma \ref{Lemma:2.3}.
\end{proof}

\begin{Corollary}
\label{Corollary:2.5}
Let $0 < S < 1$, $1 \leq P_0 < 2$.
Let $P_0 < P <\infty$ and $1 < Q,R < \infty$ satisfy
	\[
	\frac 1 P = \frac 1 Q + \frac 1 R.
	\]
The estimates
	\[
	\| 2^{jS} \| (u(y + \cdot) - u(\cdot)) \|_{L_{\psi_j,y}^{P_0}} \|_{L^P(\ell_j^2)}
	\lesssim \| u \|_{\dot H_P^s}
	\]
and
	\[
	\| 2^{jS} \| v(y+\cdot) (u(y + \cdot) - u(\cdot)) \|_{L_{\psi_j,y}^{P_0}} \|_{L^P(\ell_j^2)}
	\lesssim \| v \|_{L^Q} \| u  \|_{\dot H_R^S}
	\]
hold.
\end{Corollary}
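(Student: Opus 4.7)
The plan is to invoke Lemma \ref{Lemma:2.4} pointwise with its internal parameter $P$ replaced by $P_0$ and its $\ell^Q$-parameter replaced by $2$, then take the $L^P$ norm in the base point $x$ and clean up the resulting maximal operators with Fefferman--Stein vector-valued inequalities, finishing with the Triebel--Lizorkin characterization of homogeneous Sobolev norms recalled in the preliminary section. No new analytic input is needed beyond Lemma \ref{Lemma:2.4} and the Fefferman--Stein inequality.

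For the first estimate I would set $v \equiv 1$ in Lemma \ref{Lemma:2.4}, so that $M^{(P_0)} v \equiv 1$ and the lemma yields the pointwise bound
\[
\| 2^{jS} \| u(y+\cdot) - u(\cdot) \|_{L^{P_0}_{\psi_j, y}} \|_{\ell_j^2}(x)
\lesssim \| 2^{kS} M Q_k u \|_{\ell_k^2}(x) + \| 2^{kS} M^{(P_0)}(M Q_k u) \|_{\ell_k^2}(x).
\]
After taking $L^P$ norms, the first summand is bounded by $\| u \|_{\dot H_P^S}$ via the standard vector-valued Fefferman--Stein inequality on $L^P(\ell^2)$. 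For the second summand I would rewrite $M^{(P_0)} = (M|\cdot|^{P_0})^{1/P_0}$ and observe
\[
\| \| 2^{kS} M^{(P_0)}(MQ_k u)\|_{\ell_k^2}\|_{L^P} = \| \| M g_k\|_{\ell_k^{2/P_0}}\|_{L^{P/P_0}}^{1/P_0}
\]
with $g_k = 2^{kSP_0}(MQ_k u)^{P_0}$; the hypotheses $P_0 < 2$ and $P_0 < P$ yield $2/P_0 > 1$ and $P/P_0 > 1$, so a second Fefferman--Stein inequality strips the outer $M$ and reduces to the first summand.

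For the second estimate I would keep $v$ generic. Hölder's inequality with $\tfrac{1}{P} = \tfrac{1}{Q} + \tfrac{1}{R}$ splits the $L^P$ norm of the first summand produced by Lemma \ref{Lemma:2.4} into $\| M^{(P_0)} v \|_{L^Q} \cdot \| 2^{kS} M Q_k u \|_{L^R(\ell_k^2)}$; the Hölder identity together with $P_0 < P$ forces $Q, R > P_0$, so $M^{(P_0)}$ is bounded on $L^Q$ and the second factor is dominated by $\| u \|_{\dot H_R^S}$ via the previous Fefferman--Stein argument. The second summand $\| \| 2^{kS} M^{(P_0)}(v M Q_k u)\|_{\ell_k^2}\|_{L^P}$ is handled by the same $\ell^{2/P_0}$-valued Fefferman--Stein trick, which strips $M^{(P_0)}$ and reduces to $\| v \cdot \|2^{kS} M Q_k u\|_{\ell^2_k}\|_{L^P}$; a further application of Hölder factors this into $\| v \|_{L^Q} \| u \|_{\dot H_R^S}$. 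The main obstacle is purely bookkeeping of exponents: the critical inequalities $2/P_0 > 1$ and $Q, R > P_0$ are precisely what the hypotheses $P_0 < 2$ and $P_0 < P$ (together with $P < \min(Q,R)$ coming from the Hölder identity) are designed to supply.
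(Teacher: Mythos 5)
Your proof is correct and follows essentially the same route as the paper: apply Lemma \ref{Lemma:2.4} with exponents $(P_0,2)$, strip the maximal operators using the boundedness of $M$ on $L^{P_1/P_0}$ and on the vector-valued spaces, and finish with H\"older and the Littlewood--Paley characterization of $\dot H_P^S$. Your explicit Fefferman--Stein step in $L^{P/P_0}(\ell^{2/P_0})$ (using $P_0<P$ and $P_0<2$) is precisely the detail the paper's terse proof leaves implicit, so no further comment is needed.
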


\begin{proof}
For $P_1 > P_0$,
the boundedness of $M$ implies that we have
	\[
	\| M^{(P_0)} f \|_{L^{P_1}}
	= \| M|f|^{P_0} \|_{L^{P_1/P_0}}^{1/P_0}
	\lesssim \| |f|^{P_0} \|_{L^{P_1/P_0}}^{1/P_0}
	= \| f \|_{L^{P_1}}.
	\]
Since $P_0 < P < Q$,
Corollary \ref{Corollary:2.5} follows from Lemma \ref{Lemma:2.4} and the H\"older inequality.
\end{proof}

\section{Revisit of the Chain rules when $s \in (0,1)$}
\label{section:3}
In this section,
we revisit the proof of Proposition \ref{Proposition:1.1} and Corollary \ref{Corollary:1.4}.
The proofs are essentially given in \cite{CW91} but for completeness,
we give the proofs.

\begin{proof}[Proof of Proposition \ref{Proposition:1.1}]
Since $\int \psi(y) dy = 0$,
the identity
	\[
	Q_j F(u)(x) = \int F(u)(x+y) - F(u)(y) \psi_j(y) dy
	\]
holds for any $x \in \mathbb R^n$ and $j \in \mathbb Z$.
Therefore, Proposition \ref{Proposition:1.1} follows from
Corollary \ref{Corollary:2.5} with $(v,P_0,P,Q,R,S)$ replaced by $(G(u),1,p,q,r,s)$.
\end{proof}

\begin{proof}[Proof of Corollary \ref{Corollary:1.4}]
The identity
	\begin{align*}
	&Q_j F_\rho(u)(x) - Q_j F_\rho(v)(x)\\
	&= \int (F_\rho(u)(x+y) - F_\rho(u)(y) + F_\rho(v)(x+y) - F_\rho(v)(x)) \psi_j(y) dy
	\end{align*}
holds.
The fundamental theorem of calculus implies that the identity 
	\begin{align*}
	&F_\rho(u)(x+y) - F_\rho(u)(y) + F_\rho(v)(x+y) - F_\rho(v)(x)\\
	&= \{ (u - v)(x+y) - (u-v)(x) \} \int_0^1 F_\rho'(\theta u(x+y) + (1-\theta) u(x)) d \theta\\
	&+ \{ v(x+y) - v(x) \} \\
	&\cdot \int_0^1 \{ F_\rho'(\theta u(x+y) + (1-\theta) u(x)) - F_\rho'(\theta v(x+y) + (1-\theta) v(x)) \}
	d \theta.
	\end{align*}
holds.
The identity above and \eqref{eq:1.2} imply that the estimate
	\begin{align*}
	&| F_\rho(u)(x+y) - F_\rho(u)(y) + F_\rho(v)(x+y) - F_\rho(v)(x)|\\
	&\lesssim ( |u(x+y)|^{\rho-1} + |u(x)|^{\rho-1}) | (u - v)(x+y) - (u-v)(x) |\\
	&+ ( |u(x+y)| + |u(x)| + |v(x+y)| + |v(x)|)^{(\rho-2)_+}\\
	&\cdot (|(u-v)(x+y)| + |(u-v)(x)|)^{\rho-1}
	| (u - v)(x+y) - (u-v)(x) |
	\end{align*}
holds.
Therefore, Corollary \ref{Corollary:1.4} follows from
Corollary \ref{Corollary:2.5} and the estimates above.
\end{proof}

\section{Proofs of the Chain rules when $s \in (1,2)$}
\label{section:4}
\begin{proof}[Proof of Proposition \ref{Proposition:1.3}]
We note that
it is shown in \cite[(3.26)]{GOV94} that the estimate
	\begin{align}
	|(\tau_y - 2 + \tau_{-y}) F_\rho(u)(x)|
	&\lesssim |u(x)|^{\rho-1} |u(x+y) - 2 u(x) + u(x-y)|
	\nonumber\\
	&+ |u(x+y) - u(x)|^\rho + |u(x-y) - u(x)|^\rho
	\label{eq:4.1}
	\end{align}
holds.
Moreover, the symmetry of $\psi$ and $\int \psi = 0$ imply that we have
	\begin{align}
	Q_j F_\rho(u)(x)
	&= \int F_\rho(u)(x+y) \psi_j(y) dy
	\nonumber\\
	&= \int F_\rho(u)(x-y) \psi_j(y) dy
	\nonumber\\
	&= \frac 1 2 \int (F_\rho(u)(x+y) + F_\rho(u)(x-y)) \psi_j(y) dy
	\nonumber\\
	&= \frac 1 2 \int (\tau_y - 2 + \tau_{-y}) F_\rho(u)(x) \psi_j(y) dy
	\label{eq:4.2}
	\end{align}
From \eqref{eq:4.1} and \eqref{eq:4.2},
the estimate
	\begin{align}
	|Q_j F(u)(x)|
	&\lesssim |u(x)|^{\rho - 1} \int |u(x+y) - 2 u(x) + u(x-y)| | \psi_j(y)| dy
	\nonumber\\
	&+ \int |u(x+y) - u(x)|^p |\psi_j(y)| dy
	\label{eq:4.3}
	\end{align}
follows.
Put $w (x)$ be radially decreasing function with $w(x) \geq (1+|x|^2) |\psi(x)|$.
By Lemma \ref{Lemma:2.2}, when $k<j$,
the first term on the RHS of \eqref{eq:4.3} is estimated by
	\begin{align*}
	&\int_{|y| < C 2^{-k}}
	| \widetilde Q_k Q_k u(x+y) - 2 \widetilde Q_k Q_k u(x) + \widetilde Q_k Q_k u(x-y) |
	|\psi_j(y)| dy\\
	&\leq M Q_k u(x) \int_{|y| < C 2^{-k}} 2^{2(k-j)} \omega_j(y) dy\\
	&\lesssim 2^{2(k-j)} M Q_k u(x).
	\end{align*}
Similarly, when $k \geq j$, it is estimated by
	\begin{align*}
	&\int_{|y| > 2^{-k}}
	| \widetilde Q_k Q_k u(x+y) - 2 \widetilde Q_k Q_k u(x) + \widetilde Q_k Q_k u(x-y) |
	|\psi_j(y) |dy\\
	&\lesssim 2^{2(k-j)} \int_{|y| > 2^{-k}}
	(2 M Q_k u(x+y) + 2 M Q_k u(x) |
	\omega_j(y) dy\\
	&\lesssim 2^{2(k-j)} (M Q_k u(x) + M^2 Q_k u(x)).
	\end{align*}
Therefore, the estimates above and Lemma \ref{Lemma:2.3} imply that we have
	\begin{align}
	&\| 2^{js} |u|^{\rho-1} \int |u(\cdot+y) - 2 u(\cdot) + u(\cdot-y)| | \psi_j(y)| dy \|_{L^p(\ell^2)}
	\nonumber\\
	&\lesssim \| 2^{ks} |u|^{\rho-1} (M Q_k u + M^2 Q_k u) \|_{L^p(\ell^2)}
	\nonumber\\
	&\lesssim \| u \|_{L^{q(\rho-1)}}^{\rho-1} \| u \|_{\dot H_r^s}.
	\label{eq:4.4}
	\end{align}
Moreover, by the Lemma \ref{Lemma:2.4},
the second term of the RHS of \eqref{eq:4.3} satisfy the estimate
	\begin{align}
	\| 2^{js} \|u(\cdot + y) - u(\cdot) \|_{L_{\psi_j}^\rho}^\rho \|_{L^p(\ell^2)}
	&= \| 2^{js/\rho} \|u(\cdot + y) - u(\cdot) \|_{L_{\psi_j}^\rho}\|_{L^{\rho p}(\ell^2)}^{\rho}
	\nonumber\\
	&\lesssim \| 2^{ks/\rho} M^{(\rho)} Q_k u \|_{L^{\rho p}(\ell^2)}^{\rho}
	\nonumber\\
	&\lesssim \| u \|_{\dot H_{\rho p}^{s/\rho}}^{\rho}
	\nonumber\\
	&\lesssim \| u \|_{L^{q(\rho-1)}}^{\rho-1} \| u \|_{\dot H_r^s},
	\label{eq:4.5}
	\end{align}
where we have used the Gagliardo-Nirenberg inequality to obtain the last estimate.
For the details of the Gagliardo-Nirenberg inequality,
we refer the reader to \cite[Corollary 2.4]{HMOW11} and reference therein.
Combining \eqref{eq:4.4} and \eqref{eq:4.5},
we obtain Proposition \ref{Proposition:1.3}.
\end{proof}

\begin{proof}[Proof of Proposition \ref{Proposition:1.5}]
For simplicity, we denote $d_{\rho,s}$ by $d$.
We note that the estimate
	\begin{align}
	&| (\tau_y -2 + \tau_{-y})(F_\rho (u) - F_\rho (v))(x)|
	\nonumber\\
	&\lesssim \max_{z \in \{x, x+y, x-y\} } |u(z)|^{\rho-1} | (\tau_y -2 + \tau_{-y}) (z-w)(x)|
	\nonumber\\
	& + \max_{z \in \{x, x+y, x-y\}} |u(z)|^{\rho-1} | (\tau_y -2 + \tau_{-y}) w(x)|
	\nonumber\\
	& + (|(\tau_y-1)u(x)| + |(\tau_{-y}-1)u(x)|)^{\rho-1} |(\tau_y-1)(u-v)(x)|
	\nonumber\\
	& + |(\tau_{-y}-1)u(x)| \min( \sigma, \max_{z \in \{x, x+y, x-y\}} |(u-v)(z)| )^{\rho-1}
	\label{eq:4.6}
	\end{align}
holds, where
	\[
	\sigma
	= |(\tau_y-1)u(x)| + |(\tau_{-y}-1)u(x)|
	+|(\tau_y-1)v(x)| + |(\tau_{-y}-1)v(x)|.
	\]
For the details of the estimate above, see \cite[Lemma:A.1]{FIW}.
Therefore,
the proof of proposition \ref{Proposition:1.5},
it is sufficient to show
	\begin{align}
	&\| 2^{js} \| B(u,v)(\cdot,y) \|_{L_{\psi_j,y}^1} \|_{L^p(\ell^2)}
	\nonumber\\
	&\lesssim
	(\| u \|_{\dot H_r^s} + \| v\|_{\dot H_r^s} )
	(\| u \|_{L^{q(\rho-1)}} + \| v\|_{L^{q(\rho-1)}} )^{\rho - 1 - d_{\rho,s}}
	\| u - v\|_{L^{q(\rho-1)}}^{d_{\rho,s}}.
	\label{eq:4.7}
	\end{align}
where
	\[
	B(u,v)(x,y)
	= | u(x+y) - u(x)| |v(x+y)-v(x)|^{\rho- 1 - d}
	|u(x+y)-v(x+y)|^d.
	\]
The other terms on the RHS of \eqref{eq:4.6}
may be treated like the argument above so we omit the detail.
Put
	\[
	q_0 = q \frac{\rho-1}{d}
	\qquad \mathrm{and} \qquad
	r_0 = \frac{p q_0}{q_0-p}.
	\]
Then the H\"older inequality and Lemma \ref{Lemma:2.1} imply that we have
	\begin{align*}
	&\| B(u,v)(x,y) \|_{L^1_{\psi_j,y}}\\
	&\leq \| |(u-v)(x+y)|^d \|_{L^{q_0/p}_{\psi_j,y}}
	\| |u(x+y)-u(x)| | v(x+y) - v(x)|^{\rho-1 - d} \|_{L^{r_0/p}_{\psi_j,y}}\\
	&\lesssim (M^{(q (\rho-1)/p)}|u-v|(x))^d\\
	&\cdot \| u(x+y)-u(x) \|_{L_{\psi_j,y}^{(\rho-d)r_0/p}}
	\| v(x+y)-v(x) \|_{L_{\psi_j,y}^{(\rho-d)r_0/p}}^{\rho- d-1}.
	\end{align*}
Then Lemma \ref{Lemma:2.1} and Corollary \ref{Corollary:2.5} imply that
the estimates
	\begin{align*}
	&\| 2^{js} \| B(u,v)(\cdot,y) \|_{L_{\psi_j,y}^1} \|_{L^p(\ell^2)}\\
	&\lesssim \| M^{(q (\rho-1)/p)}|u-v| \|_{L^{dq_0}}^d\\
	&\cdot \| 2^{j s/(\rho-d)}
	\| u(\cdot+y)-u(\cdot) \|_{L_{\psi_j,y}^{(\rho-d)r_0/p}} \|_{L^{r_0(\rho-d)}(\ell^2)}\\
	&\cdot \| 2^{j s/(\rho-d)} \| v(\cdot+y)-v(\cdot) \|_{L_{\psi_j,y}^{(\rho-d)r_0/p}}
	\|_{L^{r_0(\rho-d)}(\ell^2)}^{\rho - d - 1}\\
	&\lesssim \| u-v \|_{L^{q(\rho-1)}}^d
	\| u \|_{\dot H_{r_0(\rho-d)}^{s/(\rho-d)}}
	\| v \|_{\dot H_{r_0(\rho-d)}^{s/(\rho-d)}}^{\rho-d-1}
	\end{align*}
hold.
Then \eqref{eq:4.7} follows from the estimates above and the Gagliardo-Nirenberg inequality.
\end{proof}

\begin{Remark}
In the proof above,
one cannot take $d = \rho - s$
because
	\[
	\| 2^{j} \| u(\cdot+y)-u(\cdot) \|_{L_{\psi_j,y}^{(\rho-d)r_0/p}} \|_{L^{r_0(\rho-d)}(\ell^2)}
	\]
is not controlled by Lemma \ref{Lemma:2.4}.
\end{Remark}

\section*{Acknowledgment}
The author is supported by JSPS Early-Career Scientists 20K14337.



\end{document}